\documentclass[letterpaper, 10 pt, conference]{ieeeconf}
\pdfobjcompresslevel=0
\IEEEoverridecommandlockouts
\overrideIEEEmargins
\usepackage[utf8]{inputenc}
\usepackage{amsmath}
\usepackage{amsfonts}
\usepackage{amssymb}
\usepackage{graphicx}
\usepackage{xcolor}
\usepackage{url}
\usepackage{multirow}
\usepackage{float}
\usepackage{subcaption}
\usepackage{wrapfig}
\DeclareMathOperator{\midd}{mid}
\DeclareMathOperator{\diag}{diag}

\newcommand{\rd}{\delta}
\newtheorem{definition}{Definition}
\newtheorem{lemma}{Lemma}
\newtheorem{prop}{Proposition}
\newtheorem{remark}{Remark}
\newtheorem{cor}{Corollary}
\newtheorem{theorem}{Theorem}
\usepackage{etoolbox}
\makeatletter
\patchcmd{\@makecaption}
  {\scshape}
  {}
  {}
  {}
\makeatletter
\patchcmd{\@makecaption}
  {\\}
  {.\ }
  {}
  {}
\makeatother

\begin{document}
\title{\LARGE \bf Tight Continuous-Time Reachtubes
for Lagrangian Reachability}
\author{Jacek Cyranka$^{1*}$
Md. Ariful Islam$^{2*}$, 
Scott A. Smolka$^{3}$, 
Sicun Gao$^{1}$, 
Radu Grosu$^{4}$
\thanks{$^*$Jacek Cyranka and Md.\ Ariful Islam contributted equally to this work.
$^{1}$University of California, San Diego,
$^{2}$Texas Tech University, $^{3}$Stony Brook University, $^{4}$Vienna University of Technology.}%
}
\maketitle
\begin{abstract}
We introduce continuous Lagrangian reachability (CLRT), a new algorithm for the computation of a tight and continuous-time reachtube for the solution flows of a nonlinear, time-variant dynamical system. 
CLRT employs finite strain theory to determine the deformation of the solution set from time $t_i$ to time $t_{i+1}$.
We have developed simple explicit analytic formulas for the optimal metric for this deformation; this is superior to prior work, which used semi-definite programming.
%
CLRT also uses infinitesimal strain theory to derive an optimal time increment $h_i$ between $t_i$ and
$t_{i+1}$, nonlinear optimization to minimally bloat (i.e., using a minimal radius) the state set at time $t_i$ such that it includes all the states of the solution flow in the 
interval $[t_i,t_{i+1}]$.
We use $\delta$-satisfiability to ensure the correctness of the bloating.
Our results on a series of benchmarks show that CLRT performs favorably compared to state-of-the-art tools such as CAPD in terms of the continuous reachtube volumes they compute. 
\end{abstract}
\section{Introduction}
Recent work introduced \emph{Lagrangian ReachTube algorithm} (LRT), a new approach for the reachability analysis of continuous, nonlinear, dynamical systems~\cite{Cyranka2017}.
LRT constructs a discrete-time reachtube (or flowpipe) that given a dynamical system, tightly overestimates the set of reachable states at each time point. 

The main idea of LRT was to construct a ball-overestimate in a metric space that minimizes the \emph{Cauchy-Green stretching factor} at every discrete time instant.  LRT was shown to compare favorably to other reachability analysis tools, such as CAPD~\cite{CAPD,ZW1} and Flow*~\cite{CAS,CAS2} in terms of the discrete reachtube volumes they compute on a set of well-known benchmarks. 

This paper proposes a continuous-time-reachtube extension of LRT, the motivation for which is two-fold. First, LRT, while being optimal in the discrete setting, is not sound in the continuous setting: it is not obvious how to find a ball tightly overestimating the dynamics between two discrete points. Second, LRT is not directly applicable to the analysis of hybrid systems, as the dynamics of a hybrid system may change dramatically between two discrete time points due to a mode switch.
%

The main goal of our algorithm, which we call \emph{continuous Lagrangian ReachTube algorithm} (CLRT), is to efficiently construct an ellipsoidal continuous-reachtube overestimate that is tighter than those constructed by available state-of-the-art tools such as CAPD.  CLRT combines a number of techniques to achieve its goal, including \emph{infinitesimal strain theory, analytic formulas for the tightest deformation metric, nonconvex optimization}, and \emph{$\delta$-satisfiability}.  Computing an as tight-as-possible Lagrangian continuous-reachtube overestimate helps avoid false positives when checking if a set of unsafe states can be reached from a set of initial states. 

The class of continuous dynamical systems in which we are interested is described by nonlinear, time-variant, ordinary differential equations (ODEs): 
\begin{subequations}
\label{cauchy}
\begin{align}
  \dot{x}(t)&= f(t, x(t) ),\\
  x(t_0)&= x_0,
\end{align}
\end{subequations}
\noindent{}where $x\colon\mathbb{R}\to\mathbb{R}^n$.  We assume 
$f$ is a smooth function, which guarantees short-term existence of solutions. The class of time-variant systems strictly includes the class of time-invariant systems.

Given an initial time $t_0$, set of initial states $\mathcal{X}\subset\mathbb{R}^n$, and time bound $T\,{>}\,t_0$, CLRT computes a conservative \emph{reachtube} of~\eqref{cauchy}, that is, a sequence of time-stamped sets of states $(R_1,t_1),{\dots},(R_k,t_k=T)$ satisfying:
\[
\text{Reach}\left((t_0,\mathcal{X}\right),[t_{i-1},t_i]) \subset R_i\text{ for }i = 1,\dots,k,
\]
where $\text{Reach}\left((t_0,\mathcal{X}\right),[t_{i-1},t_i])$ denotes the set of all reachable states of ODE system~\eqref{cauchy} in the time interval $[t_{i-1},t_i]$. The time steps are not necessarily uniformly spaced, and are chosen using infinitesimal strain theory (IST).

In contrast to LRT~\cite{Cyranka2017}, which only computes the set of states reachable at discrete and uniformly spaced time steps $t_i$, for $i\,{\in}\,\{1,{\ldots},k\}$, CLRT computes a tight overestimate for the set of states reachable in non-uniformly spaced continuous time intervals $[t_{i-1},t_i]$.  Hence, CLRT computes space-time cylinders overestimating the continuous-time reachtube.

We also note that the LRT approach, as in prior work on reachability~\cite{Fan2015,MA}, employed \emph{semi-definite programming} (SDP) to compute an appropriate weighted norm minimizing the \emph{Cauchy-Green stretching factor} (deformation metric).  We instead derive a very simple analytic formula for the tightest deformation metric. Thus, there is no need to invoke an optimization procedure to find a tight deformation metric, as the formula for the tightest one is now available.  We also provide a very concise proof of this fact.  Moreover, using SDP significantly increases the running time of the algorithm (compared to the approach based on analytical formulas), and can result in numerical instabilities (refer to the discussion in~\cite{Cyranka2017}) and excessive bloating. 

Let us enumerate the key contributions of this work; 1.~Computation of tightest Lagrangian reachtubes using explicit analytic formulas for the deformation metric. 2.~Derivation of continuous-time reachtube bounds by bloating the discrete-time reachtube via nonconvex optimization. 3.~Application of infinitesimal strain theory to adaptive time-step selection. 4.~Computation of the stretching factor is derived in weighted metric spaces, where the natural enclosures are ellipsoids. 5.~Demonstrate improved performance by using ellipsoidal bounds in place of boxes, which are more natural for control theoretic verification problems.

Let $\text{Reach}\left((t_0,\mathcal{X}\right),t_{i-1})\,{\subset}\,B_{M_{i-1}}(x_{i-1},\delta_{i-1})$, where $B_{M_{i-1}}(x_{i-1},\delta_{i-1})$ is the ball computed by LRT for time $t_{i-1}$. To construct a continuous reachtube overestimate for the interval $[t_{i-1},t_i]$, we bloat the radius of this ball to $\Delta_{i-1}\,{>}\,\delta_{i-1}$, until it becomes a tight overestimate for the entire interval; i.e., $\text{Reach}\left((t_0,\mathcal{X}\right),[t_{i-1},t_i])\,{\subset}\, B_{M_{i-1}}(x_{i-1},\Delta_{i-1})$. 

To ensure that the bloating is as tight as possible, we first find the largest time $t_i$ such that the displacement gradient tensor of the solutions originating in $B_{M_{i-1}}(x_{i-1},\delta_{i-1})$ becomes sufficiently close to linear.  Second, to obtain an initial estimate $\hat{\Delta}_{i-1}$, we assume that $f$ in~(\ref{cauchy}) is convex in the interval $[t_{i-1},t_i]$, and solve a convex optimization problem. Third, because for general nonlinear systems $f$ is likely non-convex, we, through an iterative process where $\hat{\Delta}_{i-1}$ is used as an initial value, compute a sound estimate of $\Delta_{i-1}$ using an SMT solver.


We have implemented a prototype of CLRT in C++ and thoroughly investigated its performance on a set of benchmarks, including those used in~\cite{Cyranka2017}.  Our results show that compared to CAPD, CLRT performs favorably in terms of the continuous-reachtube volumes they compute.  Also note that contrary to LRT, CLRT is fully implemented in C++, which significantly improves the runtime performance of our algorithm. Presently, CLRT externally uses CAPD to compute gradients of the flow, but we know how to achieve this independently, and we are currently working on implementing/distributing CLRT as a software library written in C++.

The rest of the paper is organized as follows. Section~\ref{sec:background} provides background on infinitesimal strain theory, LRT, and convex optimization.
Sections~\ref{sec:cctr} and~\ref{sec:os} describe the bloating factor and optimization steps that we use.  Section~\ref{sec:CLRT} presents the CLRT algorithm. Section~\ref{secimplres} contains our experimental results.  Section~\ref{secfuture} offers our concluding remarks and directions for future work.

\section{Background}
\label{sec:background}
This section gives the necessary background, such that paper is self contained. 
We present techniques that are used by the CLRT algorithms presented in Section~\ref{sec:CLRT} to construct overestimating tight continuous reachtubes. 

\subsection{Finite and Infinitesimal Strain Theory}
\label{sec:ist} 
\begin{figure}
  \begin{center}
    \includegraphics[width=0.25\textwidth]{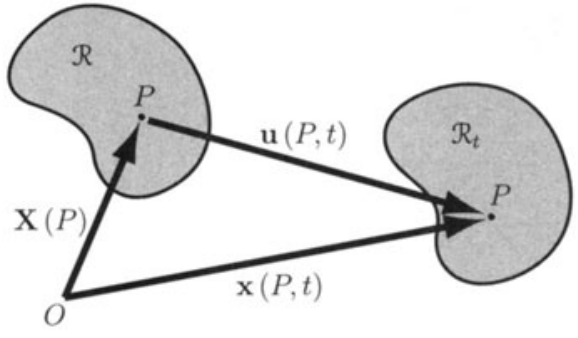}
  \end{center}
  \caption{\scriptsize The reference (or initial) configuration $\mathcal{R}$ and the current configuration $\mathcal{R}_t$ of a body $\mathbb{B}$ subjected to deformation~\cite{Slaughter2002}. A material point $P$ has reference coordinates $\mathbf{X}(P)$ in $\mathcal{R}$, and current coordinates $\mathbf{x}(P,t)$ in $\mathcal{R}_t$, if one uses the same system of coordinates. The displacement vector $u$ shows how the position of a material point $P$ changes from $\mathcal{R}$ to $\mathcal{R}_t$.}
\label{fig:dv}
\end{figure}


A central assumption in continuum mechanics is that a body can be modeled as a continuum, and that the physical quantities distributed over the body can be therefore represented by continuous fields~\cite{Slaughter2002}.

A body $\mathbb{B}$ is composed of an infinite
number of \emph{material points} $P$ and the assignment of each of these material points to a \emph{unique position} in space defines a \emph{configuration} of $\mathbb{B}$. A \emph{reference (or initial)} configuration of $\mathbb{B}$ occupying region $\mathcal{R}$ is used for comparison with the \emph{current} configurations of $\mathbb{B}$ occupying region $\mathcal{R}_t$ at subsequent moments of time $t$. 

Given a material point $P\,{\in}\,\mathbb{B}$, the position vector $\mathbf{X}(P)$ of $P$ relative to a prescribed origin $O$ in $\mathcal{R}$ is called $P$'s \emph{reference position}. The position vector $\mathbf{x}(P,t)$ of $P$ relative to $O$ in $\mathcal{R}_t$ is called the \emph{current position} of $P$. For simplicity, Figure~\ref{fig:dv} uses the same coordinate systems for $\mathcal{R}$ and $\mathcal{R}_t$. However, as we show later, it is convenient to use different coordinate systems or vector bases, which minimize the associated norms of $\mathcal{R}$ and $\mathcal{R}_t$. The coordinates of the reference (undeformed) $\mathcal{R}$ are called \emph{Lagrangian}, whereas the ones of the current (deformed) $\mathcal{R}_t$ are called \emph{Eulerian}.

The \emph{displacement} $\mathbf{u}$ of a material point $P$ from its position in $\mathcal{R}$ to its position in $\mathcal{R}_t$ is defined by the following vector equation:
\begin{equation}
\label{eqn:di}
\mathbf{u}(P,t) = \mathbf{x}(P,t) - \mathbf{X}(P)
\end{equation}
Assuming that each material point $P$ in $\mathbb{B}$ occupies a single position in space at time $t$, there is a (nonlinear) vector operator $\mathbf{\chi}$, mapping $\mathbf{X}(P)$ to $\mathbf{x}(P,t)$, that is, $\mathbf{x}(P,t)\,{=}\,\mathbf{\chi}(\mathbf{X}(P),t)$. Using $\mathbf{\chi}$, the Lagrangian description of the \emph{displacement field} is given by the following equation:
\begin{equation}
\label{eqn:df}
\mathbf{u} = \mathbf{\chi}(\mathbf{X(P)},t)\,{-}\,\mathbf{X}(P)
\end{equation}
A tensor $\mathbf{T}(P,t)$ is a physical quantity associated with the material point $P$ of a body $\mathbb{B}$ at the time $t$. This representation can be given in either Lagrangian coordinates as $\mathbf{T}(P,t)\,{=}\,\mathbf{\Psi}(\mathbf{X}(P),t)$ or Eulerian coordinates as $\mathbf{T}(P,t)\,{=}\,\mathbf{\psi}(\mathbf{x}(P,t),t)$. Since the tensor is the same no matter in which coordinates it is expressed, the Lagrangian description is related to the Eulerian description by:
\begin{equation}
\label{eqn:tle}
\mathbf{\Psi}(\mathbf{X}(P),t)\,{=}\,\mathbf{\psi}(\mathbf{\chi}(\mathbf{X}(P),t),t)
\end{equation}
A particularly important (nonsingular) tensor is the \emph{deformation gradient tensor} $\mathbf{F}$ defined by the following equation:
\begin{equation}
\label{eqn:dgt}
\mathbf{F} = \nabla_X\,\mathbf{x}(P,t) = \nabla_X\,\mathbf{\chi}(\mathbf{X}(P),t)
\end{equation}
By Equations~(\ref{eqn:di},\ref{eqn:df}), the \emph{displacement gradient tensor} is related to the deformation gradient tensor as follows:
\begin{equation}
\label{eqn:dgt}
\nabla_X\,\mathbf{u} = \mathbf{F} - \mathbf{I}
\end{equation}
where $I$ is the identity matrix.
A description of the deformation independent of both translation and rotation is given by a \emph{strain tensor}, of which the \emph{right Cauchy-Green} deformation tensor $\mathbf{C}$ and the \emph{Green-St.~Venant} strain tensor $\mathbf{E}$, are two examples:
\begin{equation}
\label{eqn:st}
\mathbf{C} = \mathbf{F}^{T}\cdot\mathbf{F}\qquad%
\mathbf{E} = (\mathbf{C} - \mathbf{I})/2
\end{equation}
Now by using the definition of $\mathbf{F}$ from Equation~\ref{eqn:dgt}, the Green-St.~Venant strain tensor $\mathbf{E}$ can be rewritten as follows:
\begin{equation}
\label{eqn:gsv}
\mathbf{E} = [(\nabla_X\,\mathbf{u}) +%
              (\nabla_X\,\mathbf{u})^T +%
              (\nabla_X\,\mathbf{u})^T\cdot(\nabla_X\,\mathbf{u})]/2
\end{equation}
If the norm $\|\nabla_X\,\mathbf{u}\|\,{\ll}\,1$, that is, each component of $\nabla_X\,\mathbf{u}$ is of order $O(\varphi)$, for some small parameter $\varphi$, then one speaks about \emph{infinitesimal deformation} and the associated theory is called the \emph{infinitesimal strain theory (IST)}.

In this case, the product $(\nabla_X\,\mathbf{u})^T\cdot(\nabla_X\,\mathbf{u})$ is of order $O(\varphi^2)$, and it can be therefore neglected. This, leads to the linearized version $\mathbf{\varepsilon}$ of $\mathbf{E}$:
\begin{equation}
\label{eqn:gsv}
\mathbf{\varepsilon} = [(\nabla_X\,\mathbf{u}) +%
              (\nabla_X\,\mathbf{u})^T]/2
\end{equation}
which is called the \emph{infinitesimal strain tensor $\mathbf{\varepsilon}$}. Similarly, the \emph{infinitesimal rotation tensor $\mathbf{\omega}$} is defined as follows:
\begin{equation}
\label{eqn:gsv}
\mathbf{\omega} = [(\nabla_X\,\mathbf{u}) -%
              (\nabla_X\,\mathbf{u})^T]/2
\end{equation}
For infinitesimal deformations, $d\mathbf{X}\,{=}\,d\mathbf{x}$, and for any tensor $\mathbf{T}$, the gradients of $\mathbf{T}$ with respect to the Lagrangian and Eulerian coordinates are the same, as $\delta\mathbf{T}/\delta{X}\,{=}\,\delta\mathbf{T}/\delta{x}$~\cite{Slaughter2002}. Hence, in IST it is not necessary to distinguish anymore between Lagrangian and Eulerian coordinates.

\subsection{Review of the LRT Algorithm}
\label{sec:lrt}
The LRT algorithm computes a conservative, discrete-time reachtube for nonlinear, time-variant dynamical systems, based on finite strain theory~\cite{Cyranka2017}. 

The main idea of LRT is to use the right Cauchy-Green strain tensor $\mathbf{C}$ to determine, in a tightest metric, the \emph{stretching factor (SF)} of a ball propagated by the system dynamics in the next time step. 
According to Eqs.~(\ref{eqn:dgt},\ref{eqn:st}), $\mathbf{C}\,{=}\,\mathbf{F}^{T}\cdot\mathbf{F}$,\ \ $\mathbf{F}\,{=}\,\nabla_X\,\mathbf{x}$, and $\mathbf{x}\,{=}\,\mathbf{\chi}(\mathbf{X},t)$, where $\mathbf{\chi}(\mathbf{X},t)$ is the \emph{solution-flow} of the system. $\mathbf{F}$ is the \emph{sensitivity matrix}~\cite{breach}.
\begin{figure}
\vspace{1ex}
\centering
\includegraphics[scale=0.5]{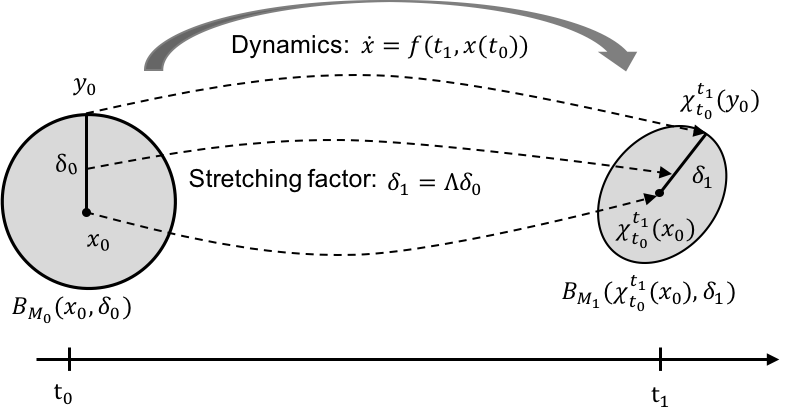}
\caption{Overview of LRT from \cite{Cyranka2017}. Dashed arrows reflect the solution flow $\chi$ and the evolution of state discrepancy.}
\label{fig:intro_fig}
\vspace{-2ex}
\end{figure}

By $\|\cdot\|_2$ we denote the \emph{Euclidean norm}, by $\|\cdot\|_\infty$ we denote the max norm; we use the same notation for the induced operator norms; We use the standard notation $\succ 0$ for positive definiteness. Let $B(x,\rd )$ be the closed ball centered at $x$ with radius $\rd$. $B_M(x,\rd )$ is the closed ball in the metric space defined by matrix $M\succ 0$.
By $\chi_{t_0}^{t_1}$ we denote the flow induced by \eqref{cauchy}.  $\nabla_x\chi_{t_0}^{t_1}$ denotes the partial derivative in $x$ of the flow WRT the initial condition at time $t_1$, which we call the \emph{gradient of the flow}, also referred to as the \emph{sensitivity matrix}~\cite{breach,donze}. 
Let $M\in\mathbb{R}^{n\times n}$ and $M\succ 0$. We say that matrix $M$ defines a metric space when the metric of this space is defined using the  distance function $d_M(x,y) = \sqrt{x^T M y}$.
We denote the norm weighted by $M\succ 0$ as $\|x\|_M = \sqrt{x^T M x}$.

Let  matrices $M_0,M_1\succ 0$ define two metric spaces.
Let $B_{M_0}(x_0,\delta_0)$ be an initial region, given as a ball in metric space $M_0$, centered at $x_0$ and of radius $\delta_0$. Let $y_0$ be a point on the surface of $B_{M_0}(x_0,\delta_0)$, and $x'_0\,{=}\,\mathbf{\chi}_{t_0}^{t_1}(x_0)$ and $y'_0\,{=}\,\mathbf{\chi}_{t_0}^{t_1}(y_0)$, where $\mathbf{\chi}_{t_0}^{t_1}(x)$ abbreviates the solution flow $\mathbf{\chi}(x,t_0,t_1)$ of $x$ when time passes from $t_0$ to $t_1$. Let $\delta_1$ be the distance between $y'_0$ and $x'_0$ in the metric space defined by matrix $M_1$ (see Figure~\ref{fig:intro_fig} for a geometric representation). 

SF $\Lambda$ measures the deformation of the ball
$B_{M_0}(x_0,\delta_0)$ into the ball $B_{M_1}(x'_0,\delta_1)$, i.e. $\Lambda\,{=}\,\delta_1{/}\delta_0$. One can thus use the SF to bound the infinite set of reachable states at time $t_1$ with the ball-overestimate $B_{M_1}(\chi_{t_0}^{t_1}(x_0), \delta_1)$ in an appropriate metric $M_1\succ 0$, which may differ from $M_0\succ 0$. If $M_1\,{=}\,M_0$ we refer to the computed SF as $M_0$-SF or $M_1$-SF, and if $M_0\neq M_1$ we refer to the computed SF as $M_{0,1}$-SF.

LRT's performance on computing tighter overapproximation depends on an appropriate choice of matrix $M_1$. LRT computes $M_1\,{\succ}\,0$ by solving a semi-definite optimization problem.
%
%
Note that the output produced by LRT can be used to compute a validated bound for the \emph{finite-time Lyapunov exponent (FTLE)} $\ln(\Lambda){/}T$, where $T$ is the time horizon. FTLE is used e.g, in climate research to detect Lagrangian-coherent structures~\cite{FTLE}. 
The correctness of the LRT algorithm is given by the following theorem~\cite{Cyranka2017}.  

\begin{theorem}[Thm.~1 in \cite{Cyranka2017}]
\label{thmlrt}
Let $t_0\,{\leq}\,t_1$ be time points, and $\chi_{t_0}^{t_1}(x)$ the solution at $t_1$ of the Cauchy problem~\eqref{cauchy}, with initial condition $(t_0,x)$. Let $M_0,M_1\in\mathbb{R}^{n\times n}$ with $M_0,M_1\,{\succ}\,0$, and $A_0^TA_0 = M_0$, $A_1^TA_1 = M_1$ their respective decompositions.
Let the ball in the $M_0$-norm with center $x_0$ and radius $\delta_0$, $\mathcal{X} = B_{M_0}(x_0,\delta_0)\subseteq\mathbb{R}^n$ be a set of initial states for \eqref{cauchy}. Assume that there exists a compact, conservative enclosure $\mathcal{F}\subseteq\mathbb{R}^{n\times n}$ for the gradients such that:
\begin{equation}
\label{F}
\nabla_x\chi_{t_0}^{t_1}(x)\in \mathcal{F},
\quad \forall x\in\mathcal{X}.
\end{equation}

Suppose $\Lambda\,{>}\,0$ is an upper bound of the whole set of $M_{0,1}$ SFs~\cite{Cyranka2017}, that is:
\begin{equation}
\label{lambdageq}
\Lambda \geq \sqrt{ \lambda_{max}\left( (A_0^T)^{-1}F^TM_1FA^{-1}_0 \right) },
\quad \forall F\in\mathcal{F}.
\end{equation}
where $\lambda_{max}$ represents the maximum eigenvalue.
Then, every solution at time $t_1$ belongs to the
ball:
\begin{equation}
\label{eqn:dtb}
\chi_{t_0}^{t_1}(x)\in B_{M_1}(\chi_{t_0}^{t_1}(x_0), \Lambda \cdot \delta_0).
\end{equation}
\end{theorem}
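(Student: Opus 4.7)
The plan is to reduce the theorem to a pointwise bound along straight lines inside the initial ball, and then apply a Rayleigh-quotient argument to convert the resulting supremum of $M_1$-norms of $F(x-x_0)$ into the eigenvalue condition~(\ref{lambdageq}).

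First, I would fix an arbitrary $x \in \mathcal{X} = B_{M_0}(x_0, \delta_0)$ and parametrize the segment $\gamma(s) = x_0 + s(x-x_0)$, $s \in [0,1]$. Since $M_0 \succ 0$ induces a genuine (Euclidean-weighted) norm, $B_{M_0}(x_0, \delta_0)$ is convex and $\gamma(s) \in \mathcal{X}$ throughout. Applying the fundamental theorem of calculus to the smooth map $s \mapsto \chi_{t_0}^{t_1}(\gamma(s))$ yields
\[
\chi_{t_0}^{t_1}(x) - \chi_{t_0}^{t_1}(x_0) = \int_0^1 \nabla_x \chi_{t_0}^{t_1}(\gamma(s)) \,(x - x_0)\, ds,
\]
and hypothesis~(\ref{F}) guarantees that each gradient appearing in the integrand lies in $\mathcal{F}$.

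Next, I would take the $M_1$-norm of both sides, push it inside the integral via Minkowski's inequality, and bound
\[
\bigl\| \chi_{t_0}^{t_1}(x) - \chi_{t_0}^{t_1}(x_0) \bigr\|_{M_1} \leq \sup_{F \in \mathcal{F}} \|F(x-x_0)\|_{M_1}.
\]
To connect this with~(\ref{lambdageq}), I would change variables by setting $y = A_0(x-x_0)$, so that $\|y\|_2 = \|x-x_0\|_{M_0} \leq \delta_0$, and rewrite
\[
\|F(x-x_0)\|_{M_1}^2 = y^T (A_0^T)^{-1} F^T M_1 F A_0^{-1}\, y.
\]
The bracketed matrix is symmetric and positive semidefinite (it equals $(A_1 F A_0^{-1})^T (A_1 F A_0^{-1})$), so the Rayleigh quotient bound gives $\|F(x-x_0)\|_{M_1} \leq \sqrt{\lambda_{\max}\bigl((A_0^T)^{-1} F^T M_1 F A_0^{-1}\bigr)} \cdot \|x-x_0\|_{M_0} \leq \Lambda \delta_0$, where the second inequality is precisely~(\ref{lambdageq}). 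Since $x \in \mathcal{X}$ was arbitrary, (\ref{eqn:dtb}) follows.

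The point requiring the most care, rather than a true obstacle, is that the line-integral representation requires $\nabla_x \chi_{t_0}^{t_1}$ to be enclosed by $\mathcal{F}$ everywhere along $\gamma$, not just at the endpoints; this is exactly where hypothesis~(\ref{F}) must be invoked with the argument ranging over the full convex set $\mathcal{X}$. Smoothness of $f$ ensures the flow is $C^1$ in its initial condition, so differentiability of $s \mapsto \chi_{t_0}^{t_1}(\gamma(s))$ and the exchange of the $M_1$-norm with the integral require no further justification.
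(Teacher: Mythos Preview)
Your argument is correct and is essentially the standard mean-value-inequality proof one would expect for this statement. Note, however, that the present paper does not actually prove Theorem~\ref{thmlrt}: it is quoted verbatim as background from~\cite{Cyranka2017} (hence the bracket ``Thm.~1 in~\cite{Cyranka2017}'') and no proof is given here beyond the observation that $\sqrt{\lambda_{\max}\bigl((A_0^T)^{-1}F^TM_1FA_0^{-1}\bigr)}=\|A_1FA_0^{-1}\|_2$. So there is nothing in this paper to compare your proof against; your line-integral plus Rayleigh-quotient route is the natural one and matches what the cited reference does.
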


Observe that ${\sqrt{ \lambda_{max}\left( (A_0^T)^{-1}F^TM_1FA^{-1}_0 \right)}} = {\left\|A_1FA^{-1}_0\right\|_2}$,
where $\|\cdot\|_2$ is spectral (Euclidean) matrix norm.

\section{Explicit Analytic Computation of the Tightest Deformation Metric}
\label{sec:3}
Observe that Thm.~\ref{thmlrt} provides freedom in switching metric spaces used from step to step (by convention, $M_0\succ 0$ denotes the initial metric, and $M_1\succ 0$ denotes the metric of the solution-set bound after one time-step).  To decide if a change of norm should be performed, we compute $\hat{M}_1$ that minimizes the $M_1$-stretching factor $\left({\sqrt{ \lambda_{max}\left( (A_1^T)^{-1}F^TM_1FA^{-1}_1 \right)}}\right)$, and decide based on that; i.e., if the resulting $\hat{M}_1$-stretching factor (SF) is by some measure significantly smaller than the $M_0$-SF $\left({\sqrt{ \lambda_{max}\left( (A_0^T)^{-1}F^TM_0FA^{-1}_0 \right)}}\right)$, then switching to $\hat{M}_1$ may result in a tighter overestimate.

We provide here a surprising argument that the optimal choice of $\hat{M}_1$ minimizing the $M_1$-SF, as well as its decomposition $\hat{M}_1=\hat{A}_1^T\hat{A}_1$ can be obtained using a straightforward computation that we present here. 

Motivated by related work~\cite{Fan2015,MA}, the LRT approach~\cite{Cyranka2017} identified a tight metric by solving a Semi-Definite Programming (SDP) problem.  We show that we do not need to invoke any (convex) optimization technique to find a tight deformation metric, because actually there exist explicit simple analytical formulas for the tightest deformation metric. In particular, this improves upon existing results two-fold: the computation is much faster, and the computed bounds are tighter than the ones computed using SDP. We provide an illustrative example to support our claims in Fig.~\ref{figfirst}. We are convinced that our technique can be applied in the related settings considered in~\cite{Fan2015,MA}, where the authors compute continuous reach-tubes by overestimating solution flows using matrix measures.

Our goal is to minimize the value of $\Lambda$, the upper bound for the $M_{1}$-SF given in Theorem~\ref{thmlrt}.  As finding the best enclosure for a set of SF is a hard problem, we use the following heuristics. The set of gradients $\mathcal{F}\subset\mathbb{R}^{n\times n}$ in our algorithm is given by an interval matrix.  Our choice of $M_1$ is determined by the value of the gradient $F$ being the middle of $\mathcal{F}$, i.e. $F = \midd{(\mathcal{F})}$. We derive an analytical formula for $\hat{M}_1$ minimizing the $M_1$-SF for $F$. We devote the remainder of this section to answer the following crucial question.

\noindent\emph{For a given gradient matrix $F$, what is the $\hat{M}_1$ minimizing the $M_1$ SF?}

\begin{definition}[Analytic $\hat{M}_1\succ 0$]
\label{defchoice}
Let $F\in\mathbb{R}^{n\times n}$ be a full-rank matrix (in our application a gradient of the flow).
Let $V(F)\in\mathbb{C}^{n\times n}$ denote the invertible matrix of \emph{normalized} eigenvectors of $F$ (column-wise). To make this matrix invertible in the case of higher-dimensional eigenspaces (where some eigenvectors are equal), we need to include generalized eigenvectors. For gradients of nonlinear flow equal eigenvectors rarely occurs; hence we do not treat the case of equal eigenvalues in detail.

We define $\hat{M}_1$ as follows:
\begin{equation}
\label{eqAopt}
\hat{A}_1(F) = V(F)^{-1}\quad\text{and}\quad \hat{M}_1(F) = \hat{A}_1(F)^T\hat{A}_1(F)
\end{equation}
When $F$ is known from context, we simply write \\$\hat{A}_1 = \hat{A}_1(F)\text{, and }\hat{M} = \hat{M}_1(F)$.
\end{definition}

We now prove that the choice made in Def.~\ref{defchoice} is optimal, i.e. it minimizes the $M_1$ SF. We remark that our choice of $\hat{M_1}$ is unique by construction using normalized eigenvectors.
\begin{theorem}[$\hat{M}_1$ is optimal]
\label{thmopt}
Let $F\in\mathbb{R}^{n\times n}$ be a full-rank matrix. Let $\hat{A}_1$ and $\hat{M}_1$ be defined by \eqref{eqAopt}. Let the $M_1$-SF be given by
\[
\Lambda(A_1,F) = {\sqrt{ \lambda_{max}\left( (A_1^T)^{-1}F^TM_1FA^{-1}_1 \right)}} = {\left\|A_1FA^{-1}_1\right\|_2}
\]
It holds that
\[
\min_{\substack{A_1\in\mathbb{R}^{n\times n}\\A_1\text{ is invertible}}}{\Lambda(A_1,F)} = \Lambda(\hat{A}_1, F),
\]
i.e., $\hat{M}_1 = \hat{A}_1^T\hat{A}_1$ minimizes the $M_1$-SF.
\end{theorem}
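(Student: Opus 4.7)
The plan is to exploit the fact that the quantity $\Lambda(A_1,F)=\|A_1 F A_1^{-1}\|_2$ is the spectral norm of a matrix similar to $F$. Since similar matrices share the same spectrum, any invertible choice of $A_1$ yields a matrix with the same eigenvalues as $F$, hence with the same spectral radius $\rho(F)$. Because the spectral norm always dominates the spectral radius, this immediately provides the universal lower bound
\[
\Lambda(A_1,F)=\|A_1 F A_1^{-1}\|_2 \;\geq\; \rho(A_1 F A_1^{-1}) \;=\; \rho(F),
\]
valid for every invertible $A_1\in\mathbb{R}^{n\times n}$. This is the heart of the argument; the rest is verifying that the claimed choice in Definition~\ref{defchoice} attains this lower bound.

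First I would establish that the eigendecomposition $F V(F)=V(F)\,D$, with $D$ the diagonal matrix of eigenvalues of $F$, gives $\hat{A}_1 F \hat{A}_1^{-1} = V(F)^{-1} F V(F) = D$. Then I would note that for any diagonal matrix $D=\diag(\lambda_1,\dots,\lambda_n)$ one has $\|D\|_2=\max_i|\lambda_i|=\rho(F)$. Combining with the previous paragraph yields $\Lambda(\hat{A}_1,F)=\rho(F)\leq \Lambda(A_1,F)$, which is exactly the asserted minimization property.

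The main obstacle, and the point at which I would be most careful, is the possibility of complex eigenvalues/eigenvectors: when $F$ is a real matrix with complex spectrum, $V(F)$ is complex and the similarity $V^{-1}FV=D$ lives over $\mathbb{C}$. I would justify the reduction by observing that the spectral-norm bound $\|\cdot\|_2\geq \rho(\cdot)$ and the evaluation $\|D\|_2=\rho(F)$ are statements about singular values and eigenvalues that hold identically for real and complex matrices, so the chain of inequalities is not affected; the real-invertible $A_1$ in the infimum still cannot undercut $\rho(F)$ because it too must be similar to $F$. I would also flag, as the authors do, the degenerate case in which $F$ fails to be diagonalizable: then $V(F)^{-1}FV(F)$ is in Jordan form rather than diagonal, and $\|J\|_2>\rho(F)$ in general, so the explicit formula only gives the exact minimizer on the (generic) diagonalizable locus. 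For the non-diagonalizable case one can still argue that $\rho(F)$ is the infimum (approached by regularizing the Jordan blocks), so the minimum is attained on a dense open set of $F$'s — which is sufficient for the algorithmic use of $\hat{M}_1$ on gradients of nonlinear flows.

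Finally, I would close by remarking that the identity $\sqrt{\lambda_{\max}((A_1^T)^{-1}F^T M_1 F A_1^{-1})}=\|A_1 F A_1^{-1}\|_2$ already observed after Theorem~\ref{thmlrt} shows that the two equivalent formulations of $\Lambda(A_1,F)$ are interchangeable, so the proof above, phrased entirely in terms of the spectral norm of a similarity, covers both. This makes the whole argument essentially a two-line consequence of spectral-radius invariance under similarity plus the tightness of $\|\cdot\|_2\geq\rho(\cdot)$ on diagonal matrices.
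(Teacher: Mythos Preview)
Your argument is correct and follows essentially the same two-step structure as the paper: establish the lower bound $\|A_1FA_1^{-1}\|_2\ge|\lambda_{\max}(F)|$ and then check that $\hat A_1=V(F)^{-1}$ diagonalizes $F$ and attains it. The only cosmetic difference is that the paper derives the lower bound via the variational form $\sigma_1(B)=\max_{\|x\|=\|y\|=1}|y^TBx|$ evaluated at an eigenvector, whereas you invoke the equivalent standard fact $\|\cdot\|_2\ge\rho(\cdot)$ together with similarity invariance of the spectrum; your added discussion of the complex-eigenvector and non-diagonalizable cases is, if anything, more careful than the paper's.
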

\begin{proof}
First, for arbitrary $A_1$, it holds that
\begin{equation*}
\label{eqsigma}
\sigma_1(A_1FA^{-1}_1) = \|A_1FA^{-1}_1\|_2 = \max_{\|x\|=1,\|y\|=1}{\left|y^TA_1FA^{-1}_1x\right|},
\end{equation*}
where $\sigma_1$ denotes the largest singular value of $A_1FA^{-1}_1$.

Let us pick $y^T = w^T$, and $x = w$, where $w$ is the normalized eigenvector corresponding to the largest eigenvalue of $A_1FA^{-1}_1$.  We have
\begin{multline*}
\|A_1FA^{-1}_1\|_2 = \max_{\|x\|_2=1,\|y\|_2=1}{\left|y^TA_1FA^{-1}_1x\right|} \geq\\\left|w^TA_1FA^{-1}_1w\right| =
|\lambda_{max}(A_1FA^{-1}_1)| = |\lambda_{max}(F)|.
\end{multline*}
Hence, the $M_1$ SF cannot be smaller than $|\lambda_{max}(F)|$. 

Second, we show that this lower bound is in fact attained for $\hat{A}_1=\hat{A}_1(F)$  defined by \eqref{eqAopt}. We have
\begin{multline*}
\|\hat{A}_1F\hat{A}_1^{-1}\|_2 = \sqrt{ \lambda_{max}\left( (\hat{A}_1^T)^{-1}F^T\hat{A}_1^{T}\hat{A}_1F\hat{A}_1^{-1} \right)}\\
=\sqrt{ \lambda_{max}\left( \diag(|\lambda_1|^2,\dots,|\lambda_n|^2) \right) } = |\lambda_{max}(F)|,
\end{multline*}
where $\lambda_1,\dots,\lambda_n$ denote the eigenvalues of $F$.
\end{proof}

\begin{remark}
\begingroup 
\setlength\arraycolsep{1pt}
Let us remark on how we compute matrix $\hat{A}_1$ in practice. Generally, this matrix is complex, as the gradient of the flow is expected to involve some rotation. We do not work within the field $\mathbb{C}$ as we apply algorithms for bounding eigenvalues of real matrices. Instead, we compute the equivalent real matrix $\hat{A}_1$, such that the resulting product $\hat{A}_1F\hat{A}_1^{-1}$ is block-diagonal (having two dimensional blocks corresponding to complex eigenvalues). For example, for $F=\begin{bmatrix}1&1\\-4&1\end{bmatrix}$, we have $\hat{A}_1(F) = \begin{bmatrix}0 & 0.4472\\0.8944  &  0\end{bmatrix}$, and $\hat{A}_1F\hat{A}_1^{-1}=\begin{bmatrix}1&-2\\2&1\end{bmatrix}$.
\endgroup
\end{remark}
We illustrate the Thm.~\ref{thmopt} optimality condition in Figure~\ref{figfirst} (Left).
\begin{figure}[h]
 \includegraphics[scale=0.25]{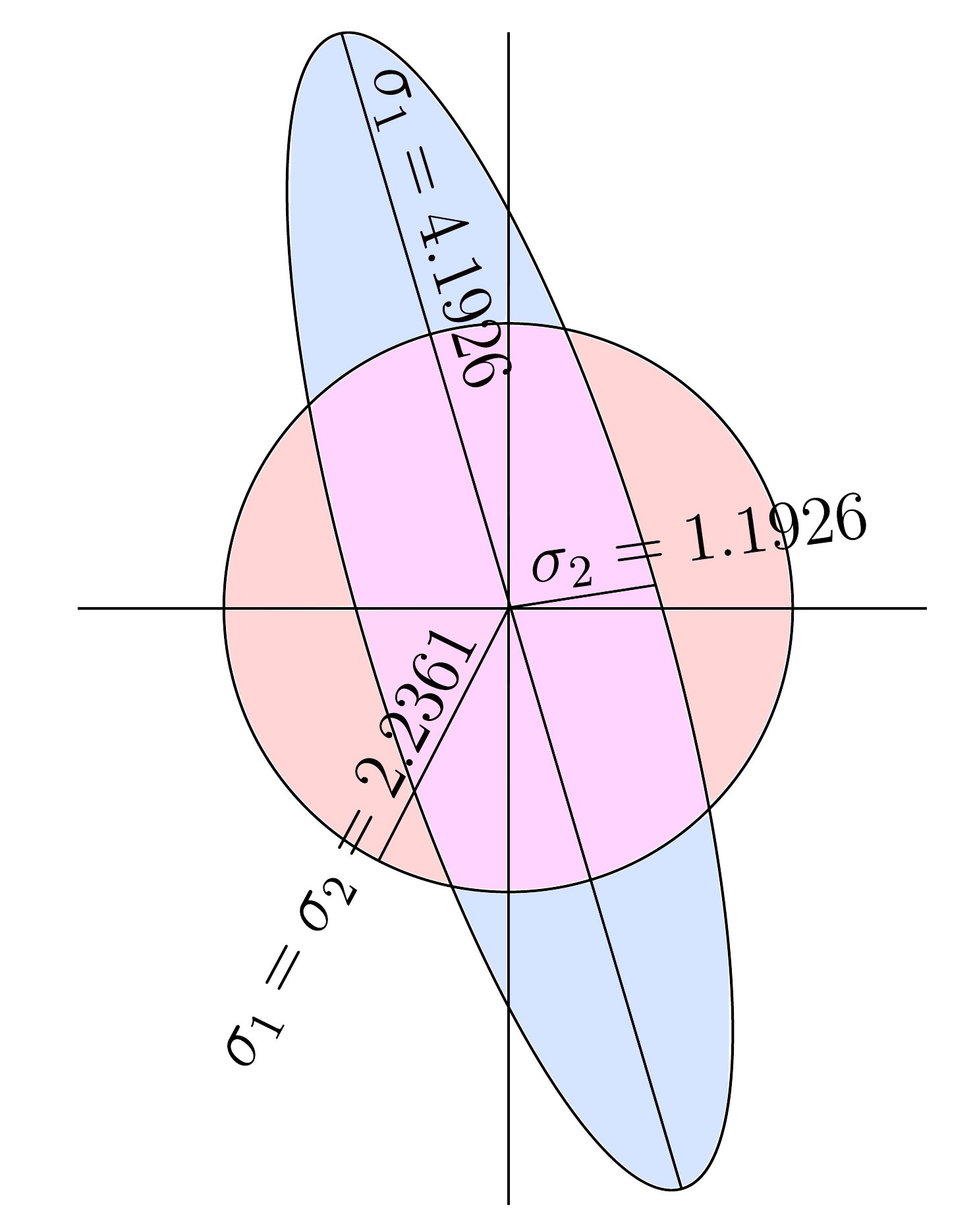}
 \includegraphics[scale=0.45]{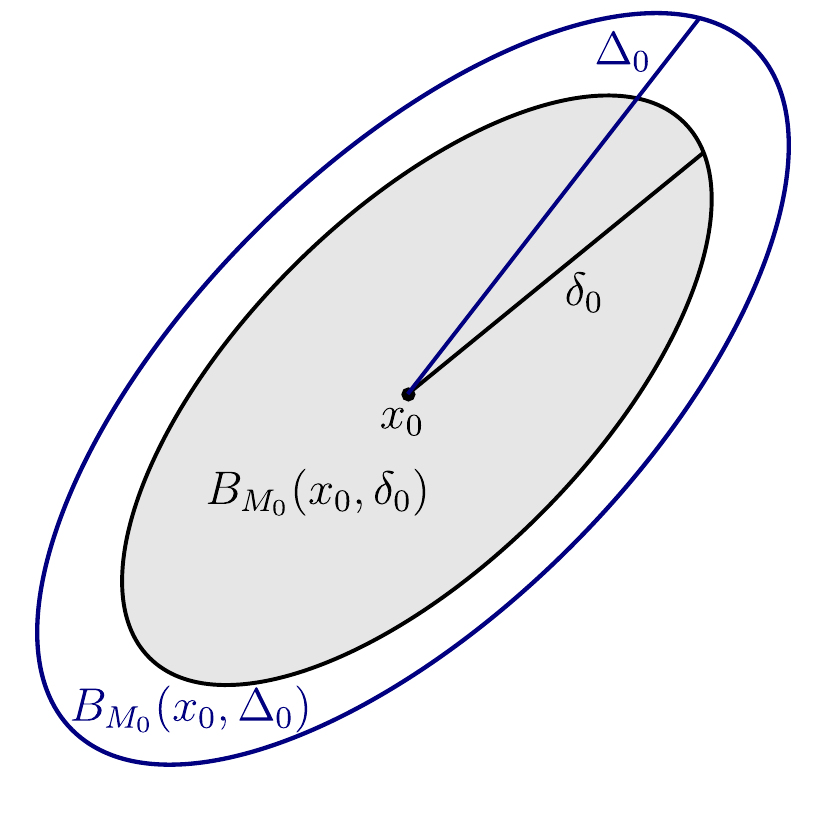}
 \caption[Illustration of optimality condition]{(Left) Illustration of optimality condition of Thm.~\ref{thmopt} for $F = \begin{bmatrix} 1 & 1\\ -4 & 1 \end{bmatrix}$,
 $F$ has conjugate pair of complex eigenvalues $1\pm i\sqrt{2}$. SVD decomposition of $F$ reveals it rotates and transforms unit disc into blue ellipse, where the radii are equal to the singular values ($\sigma_1=4.1926$, $\sigma_2=1.1926$), resp. SVD of $\hat{A}_1F\hat{A}^{-1}_1$ reveals, however, that two singular values are equal $\sigma_1=\sigma_2 = 2.2361$. Recall  SF is equal to $\sigma_1$; although the two ``balls'' have the same volume, the circular one results in a significantly smaller SF ($2.2361$ versus $4.1926$). (Right) The larger ball $B_{M_0}(x_0, \Delta_0)$ depicted in blue, is a conservative over-estimate for the reachtube continuous segment $\text{Reach}\left([t_0,t_0+h],\mathcal{X}\right)$, that is, it is such that $\chi_{t_0}^{[t_0,t_0 + h]}\left(B_{M_0}(x_0,\delta_0)\right) \subset B_{M_0}\left(x_0, \Delta_0\right)$.}
\label{figfirst}
\end{figure}

\section{Conservative Continuous-Time Reachtubes}
\label{sec:cctr}

We present a simple ellipsoidal construction for tightly  over-estimating the continuous-time segments of a reachtube. For a given ellipsoidal bound for the set of initial states of radius $\delta_0$, if the radius of this bound is bloated as in Figure~\ref{figfirst} (Right), up to a computable bound $\Delta_0$, such that:
\[
\delta_0+\max_{\substack{x\in B_{M_0}(x_0,\Delta_0)\\s\in[0,h]}}{\left\|h\cdot f(t_0\,{+}\,s,x)\right\|_{M_0}} \leq \Delta_0,
\]
where $h$ is a time increment and $f$ is the dynamics of the Cauchy problem~\eqref{cauchy}, then the resulting ellipsoid becomes a tight overestimate of the whole continuous segment 
$\chi_{t_0}^{[t_0,t_0+h]}(x)\subset\mathbb{R}^n$, representing the set of all values $\chi_{t_0}^{t}(x)$ of the solution flow of ~\eqref{cauchy}, for all times $t\,{\in}\,[t_0, t_0+h]$. 

\begin{lemma}
\label{lem:lembasic}
Given the Cauchy problem \eqref{cauchy} with $x_0\in\mathbb{R}^n$ the initial state, $t_0$ the current time, $h$ the current time-step, and $\chi_{t_0}^{t}(x)$ the solution flow, let $\Delta\,{>}\,0$ and $M\succ 0$ be a matrix defining the metric space being used. Then:  
\begin{multline*}
\max_{\substack{x\in B_{M}(x_0,\Delta)\\s\in[0,h]}}{\left\|h\cdot f(t_0\,{+}\,s,x)\right\|_{M}} \leq \Delta \\[-2mm]
\Rightarrow\chi_{t_0}^{[t_0, t_0+h]}(x_0) \subseteq B_{M}(x_0,\Delta)
\end{multline*}
\end{lemma}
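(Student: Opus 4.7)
The plan is to derive the claimed containment from the integral form of the solution, combined with a standard bootstrap/continuity argument that breaks the circularity inherent in the hypothesis (the a priori bound on $\|f\|_M$ is only available while the trajectory still lives in $B_M(x_0,\Delta)$).

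First, I would rewrite the flow as the Volterra integral equation
\[
\chi_{t_0}^{t}(x_0) \;=\; x_0 + \int_{t_0}^{t} f(s,\chi_{t_0}^{s}(x_0))\,ds,
\]
and, since $\|\cdot\|_M$ is a genuine norm ($M\succ 0$), apply the Minkowski-type inequality $\bigl\|\int g\bigr\|_M \le \int \|g\|_M$ to obtain
\[
\|\chi_{t_0}^{t}(x_0) - x_0\|_M \;\le\; \int_{t_0}^{t} \|f(s,\chi_{t_0}^{s}(x_0))\|_M\,ds.
\]
The hypothesis, rewritten as $\|f(t_0+\sigma,x)\|_M \le \Delta/h$ for every $x\in B_M(x_0,\Delta)$ and $\sigma\in[0,h]$, then bounds the integrand by $\Delta/h$, giving $\|\chi_{t_0}^{t}(x_0)-x_0\|_M \le (t-t_0)\Delta/h \le \Delta$ for all $t\in[t_0,t_0+h]$ --- \emph{provided} the trajectory already lies in $B_M(x_0,\Delta)$ on the interval of integration.

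Second, to dispose of the conditional ``provided,'' I would introduce the maximal containment time
\[
T^{\ast} \;=\; \sup\bigl\{\, T\in[t_0,t_0+h] \;:\; \chi_{t_0}^{s}(x_0)\in B_M(x_0,\Delta) \text{ for all } s\in[t_0,T]\,\bigr\}.
\]
Since $\chi_{t_0}^{t_0}(x_0)=x_0$ sits at the center of the ball and $t\mapsto\chi_{t_0}^{t}(x_0)$ is continuous (smoothness of $f$ gives short-time existence), $T^{\ast}$ is well-defined and $T^{\ast}>t_0$. Assume, for contradiction, $T^{\ast}<t_0+h$. The previous estimate applies on $[t_0,T^{\ast}]$ and yields $\|\chi_{t_0}^{T^{\ast}}(x_0)-x_0\|_M \le (T^{\ast}-t_0)\Delta/h < \Delta$, so the solution lies \emph{strictly inside} the ball at time $T^{\ast}$. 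Continuity of the flow then supplies an $\epsilon>0$ with $\chi_{t_0}^{t}(x_0)\in B_M(x_0,\Delta)$ for $t\in[T^{\ast},T^{\ast}+\epsilon]$, contradicting the maximality of $T^{\ast}$. Hence $T^{\ast}=t_0+h$, which is precisely the claimed inclusion $\chi_{t_0}^{[t_0,t_0+h]}(x_0)\subseteq B_M(x_0,\Delta)$.

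The main obstacle is the noted circularity: the hypothesis is a closed-loop bound --- it only gives information about $f$ at points already certified to belong to the bloated ball. Everything else (the Minkowski inequality for the weighted norm, the integral estimate, and the short-time continuity of the flow) is routine. The bootstrap through $T^{\ast}$ is the clean way to turn a conditional bound into an unconditional one, and the \emph{strict} inequality $(T^{\ast}-t_0)/h<1$ is the essential ingredient that lets continuity propagate the containment past any hypothetical exit time.
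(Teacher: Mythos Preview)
Your argument is correct, but it follows a different line than the paper's. The paper sets up the Picard integral operator $T_{x_0}$ on $\mathcal{C}([t_0,t_0+h],\mathbb{R}^n)$, verifies via the same Minkowski-type bound that $T_{x_0}$ maps the subspace $\mathcal{C}([t_0,t_0+h],B_M(x_0,\Delta))$ into itself, and then appeals to Schauder's fixed-point theorem to conclude that the actual solution (the fixed point of $T_{x_0}$) stays in $B_M(x_0,\Delta)$ on the whole interval. Your bootstrap via the maximal containment time $T^{\ast}$ replaces the fixed-point machinery with a direct continuation argument on the known solution; this is more elementary (no compactness or convexity of the function space needed) and it isolates cleanly the one nontrivial point---the strict inequality $(T^{\ast}-t_0)/h<1$ that forces the trajectory strictly inside the ball at any putative exit time. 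The paper's route is slightly more modular in that the self-map property of the Picard operator is a reusable statement, but for the lemma at hand your direct argument is at least as clean.
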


\begin{proof}
Let $\mathcal{C}([t_0,t_0+h],\mathbb{R}^n)$ denote the space of continuous and differentiable functions defined over the interval $[t_0,t_0+h]$ and domain $\mathbb{R}^n$. Define the operator $T_{x_0}\colon \mathcal{C}([t_0,t_0+h],\mathbb{R}^n)\to\mathcal{C}([t_0,t_0+h],\mathbb{R}^n)$ as follows:
\[
\begin{array}{c}
T_{x_0}(\chi)(h') = x_0 + \int_{0}^{h'}{f(t_0\,{+}\,s, \chi_{t_0}^{t_0\,{+}\,s}(x_0))}\,ds,\ h'\in[0,h].
\end{array}
\]
Let $\mathcal{C}([t_0,t_0+h],B_{M}(x_0,\Delta))\,{\subset}\,\mathcal{C}([t_0,t_0\,{+}\,h],\mathbb{R}^n)$ be the subspace of continuous, differentiable, and bounded functions having their range contained within $B_{M}(x_0,\Delta)$. We show that $T_{x_0}$ maps $\mathcal{C}([t_0,t_0+h],B_{M}(x_0,\Delta))$ into itself. Let  $\chi\in\mathcal{C}([t_0,t_0+h],B_{M}(x_0,\Delta))$. Then $\|T_{x_0}(\chi)(h')-x_0\|_{M}$ is bounded as follows:
\begin{multline*}
\left\|\int_0^{h'}{f(t_0\,{+}\,s,\chi_{t_0}^{t_0\,{+}\,s}(x_0))}\,ds\right\|_M \leq\\[-2mm]
\int_0^{h'}{\left\|f(t_0\,{+}\,s,\chi_{t_0}^{t_0\,{+}\,s}(x_0))\right\|_M}\,ds\leq\\[-2mm]
\max_{\substack{x\in B_{M}(x_0,\Delta)\\s\in[0,h']}}{\left\|h'\cdot f(t_0\,{+}\,s,x)\right\|_{M}}\leq\\[-2mm]
\max_{\substack{x\in B_{M}(x_0,\Delta)\\s\in[0,h]}}{\left\|h\cdot f(t_0\,{+}\,s,x)\right\|_{M}}.
\end{multline*}
The inequalities are due to the fact that $\chi\in\mathcal{C}([t_0,t_0+h],B_{M}(x_0,\Delta))$.
The second and the third ones allow us to compute the integral explicitly. Now using the assumption $\max_{\substack{x\in B_{M}(x_0,\Delta)\\s\in[0,h]}}{\left\|h\cdot f(t_0\,{+}\,s,x)\right\|_{M}} \leq \Delta$,
we can infer that:
\[
T_{x_0}(\chi)(h')\in B_M(x_0,\Delta)\text{ for all }h'\in[0,h].
\]
As $\chi$ was arbitrary, $T_{x_0}\left(\mathcal{C}([t_0,t_0+h],B_{M}(x_0,\Delta))\right)$ is a subset of the class of continuous functions $\mathcal{C}([t_0,t_0+h],B_{M}(x_0,\Delta))$.
For instance, the standard Schauder's fixed-point theorem argument shows that the solution of \eqref{cauchy} with the initial condition $(t_0,x_0)$ -- a fixed point of $T$, satisfies
$
\chi_{t_0}^{t_0+h'}(x_0)\in B_{M}(x_0,\Delta), \forall{h'}\in[0,h].
$
\end{proof}

\begin{theorem}[Optimization for a tight overestimate]
\label{thmcauchy}
Consider the Cauchy problem \eqref{cauchy}, and let $\chi_{t_0}^{t}(x)$ denote the flow generated by~\eqref{cauchy}. Let $x_0\in\mathbb{R}^n$ be an initial state, $t_0$ be the current time, $h$ be the current time-step, and $M_0\succ 0$ be a matrix defining a metric.  Let $B_{M_0}(x_0,\delta_0)$ be given (output from the \emph{LRT algorithm}). Then for all $\bar{x}\in B_{M_0}(x_0,\delta_0)$:
\begin{multline}
\label{eqDelta}
\delta_0+\max_{\substack{x\in B_{M_0}(x_0,\Delta_0)\\s\in[0,h]}}{\left\|h\cdot f(t_0+s,x)\right\|_{M_0}} \leq \Delta_0 \Rightarrow\\[-2mm]
\chi_{t_0}^{[t_0,t_0+h]}(\bar{x})\subset B_{M_0}(x_0,\Delta_0)
\end{multline}

\end{theorem}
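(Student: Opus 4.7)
The plan is to mirror the fixed-point argument used in Lemma~\ref{lem:lembasic}, but with one adjustment to account for the fact that the ball $B_{M_0}(x_0,\Delta_0)$ is centered at $x_0$ whereas the flow we need to enclose now starts at an arbitrary $\bar{x}\in B_{M_0}(x_0,\delta_0)$, not at $x_0$ itself. The extra displacement $\|\bar{x}-x_0\|_{M_0}\le\delta_0$ is precisely why the hypothesis of Theorem~\ref{thmcauchy} carries the additional $\delta_0$ summand on the left that did not appear in Lemma~\ref{lem:lembasic}.

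Concretely, for any fixed $\bar{x}\in B_{M_0}(x_0,\delta_0)$, I would introduce the Picard operator
\[
T_{\bar{x}}(\chi)(h') \;=\; \bar{x} + \int_0^{h'} f\!\bigl(t_0+s,\chi_{t_0}^{t_0+s}(\bar{x})\bigr)\,ds,\qquad h'\in[0,h],
\]
acting on the closed subspace $\mathcal{C}([t_0,t_0+h],B_{M_0}(x_0,\Delta_0))$. The goal is to show $T_{\bar{x}}$ maps this subspace into itself, using the triangle inequality in the $\|\cdot\|_{M_0}$ norm:
\[
\|T_{\bar{x}}(\chi)(h')-x_0\|_{M_0} \;\le\; \|\bar{x}-x_0\|_{M_0} + \Bigl\|\int_0^{h'} f(t_0+s,\chi_{t_0}^{t_0+s}(\bar{x}))\,ds\Bigr\|_{M_0}.
\]
The first term is bounded by $\delta_0$ since $\bar{x}\in B_{M_0}(x_0,\delta_0)$, and the integral term is bounded using the same sequence of steps as in Lemma~\ref{lem:lembasic} (pull $\|\cdot\|_{M_0}$ inside the integral, then take the supremum over the range of $\chi$, which lies in $B_{M_0}(x_0,\Delta_0)$, and over $s\in[0,h]$). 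Adding these two bounds yields exactly the left-hand side of the hypothesis in \eqref{eqDelta}, which is assumed to be at most $\Delta_0$.

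Having established invariance of $\mathcal{C}([t_0,t_0+h],B_{M_0}(x_0,\Delta_0))$ under $T_{\bar{x}}$, the Schauder fixed-point argument invoked in Lemma~\ref{lem:lembasic} applies verbatim: the unique solution $\chi_{t_0}^{t_0+\cdot}(\bar{x})$ of the Cauchy problem with initial condition $(t_0,\bar{x})$ is a fixed point of $T_{\bar{x}}$, and hence lies in $B_{M_0}(x_0,\Delta_0)$ throughout $[t_0,t_0+h]$. Since $\bar{x}$ was an arbitrary point of $B_{M_0}(x_0,\delta_0)$, this yields the claimed enclosure.

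The main obstacle, such as it is, is purely bookkeeping: one must be careful that the bloated ball is centered at $x_0$ (not at $\bar{x}$), so that the $\delta_0$ term picks up the initial offset rather than being absorbed into the integral bound. Beyond that, the argument is essentially a shifted restatement of Lemma~\ref{lem:lembasic} together with the observation that the inequality in the hypothesis is uniform in $\bar{x}\in B_{M_0}(x_0,\delta_0)$, so a single choice of $\Delta_0$ works for every starting point in the initial ellipsoid simultaneously.
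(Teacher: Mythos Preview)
Your argument is correct and uses the same underlying Picard/Schauder mechanism as the paper, but the paper organizes the reduction slightly differently. Rather than redoing the fixed-point argument on the ball $B_{M_0}(x_0,\Delta_0)$ and absorbing the offset $\|\bar{x}-x_0\|_{M_0}\le\delta_0$ via the triangle inequality inside the Picard estimate, the paper observes that $B_{M_0}(\bar{x},\Delta_0-\delta_0)\subset B_{M_0}(x_0,\Delta_0)$, so the hypothesis of Theorem~\ref{thmcauchy} immediately yields the hypothesis of Lemma~\ref{lem:lembasic} with center $\bar{x}$ and radius $\Delta_0-\delta_0$; one then applies that lemma as a black box to get $\chi_{t_0}^{[t_0,t_0+h]}(\bar{x})\subset B_{M_0}(\bar{x},\Delta_0-\delta_0)\subset B_{M_0}(x_0,\Delta_0)$. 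The two routes are logically equivalent---yours recenters the estimate, the paper recenters the ball---and the paper's version is marginally shorter since it avoids re-invoking Schauder.
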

\begin{proof}
Pick any $\bar{x}\in B_{M_0}(x_0,\delta_0)$. 
Rewriting the current assumption as
\begin{multline*}
\max_{\substack{x\in B_{M_0}(\bar{x},\Delta_0-\delta_0)\\s\in[0,h]}}{\left\|h\cdot f(t_0+s,x)\right\|_{M_0}}\leq\\
\max_{\substack{x\in B_{M_0}(x_0,\Delta_0)\\s\in[0,h]}}{\left\|h\cdot f(t_0+s,x)\right\|_{M_0}} \leq \Delta_0 - \delta_0,
\end{multline*}
the first inequality holds from $B_{M_0}(\bar{x},\Delta_0-\delta_0)\subset B_{M_0}(x_0,\Delta_0)$.
From Lemma~\ref{lem:lembasic} it immediately follows that
\[
\chi_{t_0}^{[t_0,t_0+h]}(\bar{x})\subset B_{M_0}(\bar{x},\Delta_0 - \delta_0)\subset B_{M_0}(x_0,\Delta_0).
\]
\end{proof}

The following Corollary is obtained by performing minor changes to the proof of Theorem~\ref{thmcauchy}.
\begin{cor}[Applying Thm.~\ref{thmcauchy} backwards in time]
\label{cor:back}
Given the ODE system~\eqref{cauchy}, with $x_0\in\mathbb{R}^n$ the initial state, $t_0$ the current time, $h$ the current time-step, and $\chi_{t_0}^{t}(x)$ the solution flow, let $\Delta_0\,{>}\,0$ and $M_0\succ 0$ define the metric space used. Then for all $\bar{x}\in B_{M_0}(x_0,\delta_0)$:  
\begin{multline}
\label{eqDeltab}
\delta_0 + \max_{\substack{x\in B_{M_0}(x_0,\Delta_0)\\s\in[0,h]}}{\left\|h\cdot f(t_0\,{-}\,s,x)\right\|_{M_0}} \leq \Delta_0 \Rightarrow\\[-2mm]
\chi_{t_0}^{[t_0-h, t_0]}(\bar{x}) \subseteq B_{M_0}(x_0,\Delta_0).
\end{multline}
\end{cor}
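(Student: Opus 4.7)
The plan is to reduce the corollary to Theorem~\ref{thmcauchy} via a time-reversal substitution, since the backward-time case is structurally the same statement with the direction of time flipped. Fix an arbitrary $\bar{x}\in B_{M_0}(x_0,\delta_0)$ and define the reversed trajectory $y(\tau):=\chi_{t_0}^{t_0-\tau}(\bar{x})$ for $\tau\in[0,h]$. By the chain rule, $y$ satisfies the auxiliary Cauchy problem
\[
\dot{y}(\tau)=g(\tau,y(\tau)):=-f(t_0-\tau,y(\tau)),\qquad y(0)=\bar{x},
\]
so the backward segment $\chi_{t_0}^{[t_0-h,t_0]}(\bar{x})$ coincides, as a set, with the forward image $y([0,h])$ of this new (smooth) system.

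Next I would invoke Theorem~\ref{thmcauchy} applied to $g$ with initial time $0$, step $h$, initial ball $B_{M_0}(x_0,\delta_0)$, and metric $M_0$. Its hypothesis becomes
\[
\delta_0+\max_{\substack{x\in B_{M_0}(x_0,\Delta_0)\\s\in[0,h]}}\|h\cdot g(s,x)\|_{M_0}\leq\Delta_0.
\]
Because $\|-v\|_{M_0}=\|v\|_{M_0}$ for every $v\in\mathbb{R}^n$, one has $\|h\cdot g(s,x)\|_{M_0}=\|h\cdot f(t_0-s,x)\|_{M_0}$, so this hypothesis is exactly \eqref{eqDeltab}. Theorem~\ref{thmcauchy} then yields $y([0,h])\subset B_{M_0}(x_0,\Delta_0)$, which is the desired inclusion $\chi_{t_0}^{[t_0-h,t_0]}(\bar{x})\subset B_{M_0}(x_0,\Delta_0)$.

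The only things to verify are routine: smoothness and short-term existence for $g$ (inherited from $f$), invariance of $B_{M_0}(x_0,\Delta_0)$ under the substitution (trivial, since neither $x_0$ nor $M_0$ changes), and the correspondence between $y([0,h])$ and $\chi_{t_0}^{[t_0-h,t_0]}(\bar{x})$ (by construction). Alternatively, one could mimic the Picard-iteration argument of Lemma~\ref{lem:lembasic} directly with the backward operator
\[
\widetilde{T}_{x_0}(\chi)(h')=x_0-\int_0^{h'}f(t_0-s,\chi_{t_0}^{t_0-s}(x_0))\,ds,\qquad h'\in[0,h],
\]
and repeat the Schauder fixed-point argument verbatim; the norm-invariance under sign change again makes the bound carry through. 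I do not anticipate any genuine obstacle here, since the reversal trick preserves every estimate used in the forward proof.
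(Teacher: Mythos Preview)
Your proposal is correct and matches the paper's intent: the paper states only that the corollary ``is obtained by performing minor changes to the proof of Theorem~\ref{thmcauchy},'' which is exactly your alternative route via the backward Picard operator $\widetilde{T}_{x_0}$. Your primary time-reversal reduction (applying Theorem~\ref{thmcauchy} as a black box to $g(\tau,y)=-f(t_0-\tau,y)$) is a slightly cleaner packaging of the same idea, but the content is identical.
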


\begin{remark}
An important consequence of Theorem~\ref{thmcauchy} and Corollary~\ref{cor:back} is that for \emph{time invariant systems}, conditions~\eqref{eqDelta},\eqref{eqDeltab} imply
\begin{multline}
\delta_0+\max_{x\in B_{M_0}(x_0,\Delta_0)}{\left\|h\cdot f(x)\right\|_{M_0}} \leq \Delta_0\Rightarrow\\
\chi_{t_0}^{[t_0-h,t_0]}(\bar{x})\text{ and } \chi_{t_0}^{[t_0,t_0+h]}(\bar{x})\subset B_{M_0}(x_0,\Delta_0).
\end{multline}
Hence, ball $B_{M_0}(x_0,\Delta_0)$ covers both the forward and backward orbits locally for all times $[t_0-h, t_0+h]$ that initiate within $B_{M_0}(x_0,\delta_0)$.
\end{remark}

\section{Nonconvex Optimization in CLRT}
\label{sec:os}

\subsection{Bounding the Maximal Vector Field in Metric $\mathbf{M}_0$}
\label{smt}

In Theorem~\ref{thmcauchy} of Section~\ref{sec:CLRT}, we give a computable condition for determining a tight overestimate of a continuous reachtube segment, based on the set of initial states in a ball $B_{M_0}(x_0,\delta_0)$, in some metric  $M_0\succ 0$.  More precisely, a conservative radius (denoted by $\Delta_0$) of the continuous reachtube segment overestimate needs to satisfy:
\begin{equation}
\label{eqDeltacond}
\max_{\substack{x\in B_{M_0}(x_0,\Delta_0)\\s\in[0,h]}}{\left\|h\cdot f(t_0+s,x)\right\|_{M_0}} \leq \Delta_0 - \delta_0.
\end{equation}
Verifying conservativeness of $\Delta_0$ requires bounding the maximal vector field value in a metric given by $M_0$, as in the left-hand side of~\eqref{eqDeltacond}. We emphasize that any convex optimization program (COP) for verifying this condition will not be sound, as we neither assume convexity of $f$ in~\eqref{cauchy}, nor does it follow from our approach. 

An interesting problem in this case is to see if $f$ restricted to times from time-step adaptation scheme based on IST is locally convex. If $f$ is still nonconvex, one may need to further decrease the time-step such that $f$ becomes convex in this small range, and a COP can be used to find $\Delta_0$ satisfying \eqref{eqDeltacond}.

Due to a possible lack of convexity, we restate the global optimization problem of bounding the left-hand side of~\eqref{eqDeltacond} as one of $\varepsilon$-satisfiability over the reals~\cite{dreal2,dreal}. (Normally referred to as $\delta$-satisfiability, we use the name $\varepsilon$-satisfiability to avoid confusion with $\delta_0$, denoting a ball radius in our context.)
For an initial guess for $\Delta_0\,{-}\,\delta_0$, given for example by COP, we define the following quantified formula:
\begin{equation}
\label{eqsat}
\exists_{x\in B_{M_0}(x_0,\Delta_0)}\exists_{s\in[0,h]}{\|h\,{\cdot}\,f(t_0+s,x)\|_{M_0} > \Delta_0\,{-}\,\delta_0}.
\end{equation}
We provide below an interpretation of the $\varepsilon$-satisfiability answers for \eqref{eqsat}, where the first answer tells us that $\Delta_0\,{-}\,\delta_0$ is a good bound:
\textsf{UNSAT} means that $\forall x\,{\in}\,B_{M_0}(x_0,\Delta_0)$, $\forall s\,{\in}\,[0,h]$, the inequality $\|h\,{\cdot}\,f(t_0+s,x)\|_{M_0} \le \Delta_0-\delta_0$ holds, and hence \eqref{eqDeltacond} is satisfied. 
%
$\varepsilon$-\textsf{SAT} means that an $\varepsilon$-weakening is satisfiable, i.e., $\exists x$ $\in$ $B_{M_0}(x_0,\Delta_0)$, $\exists s$ $\in$ $[0,h]$,  $\left|\|h\,{\cdot}\,f(t_0+s,x)\|_{M_0}\,{-}\,\varepsilon\right| > \Delta_0\,{-}\,\delta_0$. 

\section{The CLRT Reachability Algorithm}
\label{sec:CLRT}

\noindent\emph{Notation.}
By $[x]$ we denote a product of intervals (a box), i.e., a compact and connected set $[x]\subset\mathbb{R}^n$. We will use the same notation for interval matrices.

\begin{definition}
\label{defconserv}
Given an initial set $\mathcal{X}$, initial time $t_0$, and target time $t_1\geq t_0$, we call the following compact sets:
\begin{itemize}
\item $\mathcal{W}\,{\subset}\,\mathbb{R}^n$ a \emph{tight} reach-set enclosure if 
$\forall{x}\,{\in}\,\mathcal{X}.~\chi_{t_0}^{t_1}(x)\in\mathcal{W}$.
\vspace*{-1mm}\item $\mathcal{F}\,{\subset}\,\mathbb{R}^{n\times n}$ a \emph{conservative} gradient enclosure if $\forall{x}\,{\in}\,\mathcal{X}.~\nabla_x\chi_{t_0}^{t_1}(x)\in\mathcal{F}$.
\end{itemize}
\end{definition}
\newcommand{\reach}{\mbox{Reach}}
\newcommand{\calX}{\mathcal{X}}
Given a set $\mathcal{X}\,{\subset}\,\mathbb{R}^n$ and a time $t_0$, we call a state $x\,{\in}\,\mathbb{R}^n$ \emph{reachable} within time interval $[t_1,t_2]$ if there is an initial state $x_0\,{\in}\,\calX$ at time $t_0$ and a time  $t\in[t_1,t_2]$, such that $x=\chi_{t_0}^t(x_0)$. The set of all reachable states in interval $[t_1,t_2]$ is called the \emph{reach set} and is denoted by $\reach((t_0,\calX),[t_1,t_2])$.

\begin{definition}[\cite{FKXS} Def.~2.4]
\label{defreachtube}
Given an initial set $\mathcal{X}$, initial time $t_0$, and time bound $T$, a \emph{$((t_0,X), T)$-reachtube} of~\eqref{cauchy} is a sequence of time-stamped sets 
$(R_1, t_1),\dots, (R_k, t_k)$ satisfying the following properties:
 (1)~$t_0 \leq t_1 \leq \dots \leq t_k = T$,
 (2)~$\reach((t_0,\mathcal{X}), [t_{i-1}, t_i]) \subset R_i, \forall i = 1,\dots, k$.
\end{definition}
We shall henceforth simply use the name
\emph{reachtube overestimate} of the flow defined by ODE system~\eqref{cauchy}.
We now present the CLRT algorithm for computing tight over-estimations for segments $(R_i,t_i)$, with 
$\reach((t_0,\mathcal{X}), [t_{i-1}, t_i])\,{\subset}\,R_i$,
whose union makes up the complete \emph{$((t_0,X), T)$-reachtube} of~\eqref{cauchy}.
CLRT therefore computes the \emph{whole reachtube overestimate} of the flow defined by \eqref{cauchy}. LRT computes discrete-time slices of the CLRT reachtube.

\label{sec:alg2}
\noindent{\bf Input:}\emph{ODE system~\eqref{cauchy}};
\emph{Parameters:}Time horizon $T$, initial time $t_0$, number of discrete-time steps $k$, and initial time increment $h\,{=}\,T{/}k$ (observe that $h$ may change during execution of the algorithm due to the IST condition);
\emph{Metric}: Positive-definite symmetric matrix $M_0\succ 0$ for initial norm. 
\emph{Initial region}: Bounds $[x_0]\,{\subset}\,\mathbb{R}^n$ for the center, and the radius $\rd_0\,{>}\,0$, for the ball $B_{M_0}(x_0,\delta_0)$ with norm $M_0$ at initial time $t_0$.
\emph{IST threshold}: $\varepsilon_{IST}>0$ -- threshold used to check for smallness of the IST displacement gradient tensor.
\emph{Increment for $\varepsilon$-satisfiability}: $C_\delta>1$ -- increment used for iterative validation of upper bound for maximal speed within bounds using $\varepsilon$-satisfiability.
\emph{Norm switch threshold} $C_M>0$ -- threshold value used to decide if the metric space used should be updated to a new $\hat{M}_1$.
%

\noindent{\bf Output:}
%
$\{ [x_j] \}_{j=1}^k\,{\subset}\,\mathbb{R}^{n\times{k}}$: Interval enclosures for ball centers $x_j$ at time 
$t_0\,{+}\,jh$.
$\{ M_j \}_{j=1}^k$: Norms defining metric spaces for the ball enclosures.
$\{ \Delta_j \}_{j=1}^k\,{\in}\,\mathbb{R}_+^{k}$: Radii of the ball enclosures at $x_j$, for $j = 1,\dots,k$.\footnote{Observe that the radius is valid for the $M_j$ norm, $B_{M_j}([x_j],\Delta_j)\subset\mathbb{R}^n$ for $j = 1,\dots,k$ is a conservative output, that is, $B_{M_j}([x_j],\Delta_j)$ is an over-approximation for the set of states reachable at times  $[t_0,t_1]$ starting from any state $(t_0,x)$, such that $\forall{x}\in \mathcal{X}$:
$
\reach((t_0,\mathcal{X}), [t_j, t_{j+1}]) \subset 
B_{M_j}([x_j], \Delta_j)\text{, for } j = 1,\dots,k.
$}\\[1mm]
%
%
\noindent{\bf Begin CLRT}\footnote{For notational brevity, we use $0$ and $1$ in the subscript to denote $j$ and $(j+1)$, respectively.}
\begin{enumerate}
\item\emph{Begin IST}
\begin{enumerate}
\item Compute overestimates for the (deformation) gradient tensor $[\nabla_x\chi_{t_0}^{t_1}\left([B([x_0],\delta_0\right)])]$, and for the displacement gradient tensor $[\nabla_X{u(X,t)}]$.
\item Adjust the time increment $h$ by halving it until
$
\|[\nabla_X{u(X,t)}]\| < \varepsilon_{IST}\text{ is satisfied.}
$
\item Set $t_1\,{=}\,t_0\,{+}\,h$.
\end{enumerate}
\item\emph{End IST, Begin improved LRT (see section~\ref{sec:3})} 
\begin{enumerate}
\item Compute an enclosure for $[x_1]$, i.e. the center of the reachtube at $t_1$, and for the gradient of the flow initiating at $[x_0]$, i.e. $[D_x\chi_{t_0}^{t_1}([x_0])]$.

\item Compute the optimal deformation metric $\hat{M}_1(F)=\hat{A}_1(F)^T\hat{A}_1(F)$ (see Def.~\ref{defchoice}) for $F=\midd{[D_x\chi_{t_0}^{t_1}([x_0])]}$.

\item If it holds that $M_0\text{-SF} > C_M\cdot \hat{M}_1\text{-SF}$, set $M_1 = \hat{M}_1$. Otherwise, set $M_1=M_0$.

\item Compute an upper bound for $M_{0,1}$-SF \eqref{lambdageq} (denoted $\Lambda$), and compute the discrete-time reachtube overestimate at time $t_1$:
\[
B_1 = B_{M_1}([x_1], \Lambda\cdot\delta_0), \quad
\reach((t_0,\mathcal{X}), t_1) \subset B_1.
\]
\end{enumerate}
\item\emph{End improved LRT, Begin continuous part}
\begin{enumerate}
\item Initialize $\Delta_0 = \delta_0 \cdot C_\delta$.
\item Solve a nonlinear convex optimization problem to compute $\tilde{\delta}$, an approximate maximum of the left-hand side of~(\ref{eqDeltacond}), and set $\hat{\Delta}_0=\delta_0+\tilde{\delta}$. 
\item Update $\tilde{\delta}$ to satisfy an upper bound for the global maximum of the left-hand side of~(\ref{eqDeltacond}) as follows:
\begin{enumerate}
\item check following SMT formula using dReal:
\begin{equation*}
\begin{split}   
\exists_{x\in B_{M_0}(x_0,\hat{\Delta}_0)}\exists_{s\in[0,h]}\|h\,{\cdot}\,f(t_0\,{+}\,s,x)\|_{M_0}\\
>\hat{\Delta}_0\,{-}\,\delta_0 = \tilde{\delta}.
\end{split}
\end{equation*}
\item If dReal returns \textsf{UNSAT}, then $\tilde{\delta}$ is an upper bound for the global maximum. Otherwise, set $\tilde{\delta}=\tilde{\delta}\cdot C_\delta$, and $\hat{\Delta}_0=\delta_0+\tilde{\delta}$, go to step i.
\end{enumerate}	

\item If we set $\Delta_0 = \hat{\Delta}_0$, then (\ref{eqDeltacond}) holds, and thus $\Delta_0$ is an appropriate radius for the  continuous tube. Otherwise, we set $\Delta_0=\Delta_0\cdot C_\delta$ and go to step (b).

\end{enumerate}
\item\emph{End continuous part}
\item For next interval, reset the initial time to $t_1$, and consider the enclosure for the new initial set 
as $B_{M_1}([x_{1}], \rd_1)$. 
\item Save $B_{M_0}([x_0], \Delta_0)$ satisfying
{\small\begin{multline*}
\reach((t_0, B_{M_0}([x_0], \delta_0)), [t_{0}, t_1]) \subset B_{M_0}([x_0], \Delta_0).
\end{multline*}}
\item If $t_1 \geq T$ terminate. Otherwise, go back to 1. 
\end{enumerate}
{\bf End CLRT}
\begin{figure}
    \begin{subfigure}[b]{0.24\textwidth}
        \includegraphics[width=\textwidth]{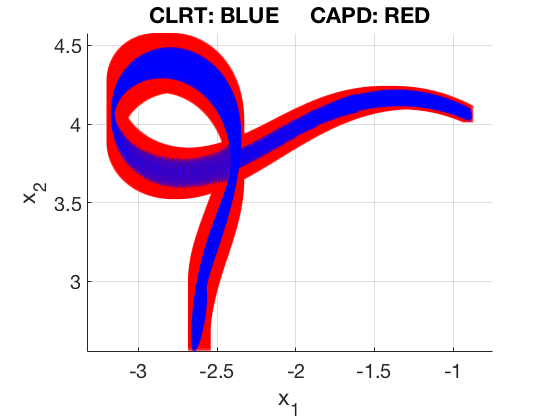}
    \end{subfigure}
 \begin{subfigure}[b]{0.24\textwidth}
        \includegraphics[width=\textwidth]{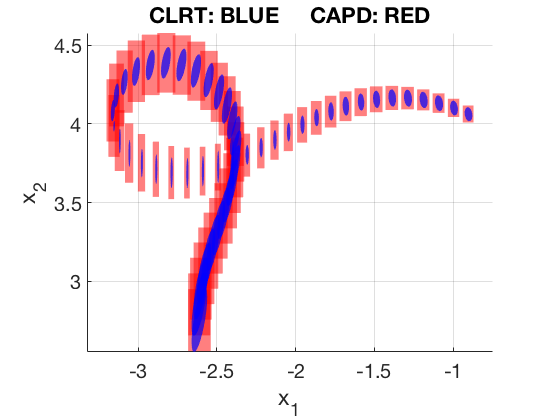}
\end{subfigure}
\caption[Dubins car]{Comparison of continuous-time reachtubes for Dubins car example with nonlinear steering function $\dot{x}=\cos{\theta}, \dot{y}=\sin{\theta}, \dot{\theta}=x\sin{t}$. See also Table~\ref{table:res}, row D3.  Projection of computed bounds onto $x,y$ is shown. The left figure presents all tube segments for times within $[5, 10]$, whereas the right figure shows one segment per $20$ sequential segments computed by the algorithms. Set of initial states
has center at $(0,0,0.7854)$ and radius $0.01$.}
\label{figdubins}
\end{figure}
\begin{prop}
Assume that the rigorous tool used by LRT produces conservative gradient enclosures for~\eqref{cauchy}, and that LRT terminates on the provided inputs. Let $[t_0, T]$ be the whole time interval, which is traversed by CLRT in $k$ steps. Then, the output of the CLRT is a \emph{tight reachtube over-approximation of~\eqref{cauchy}} for all times in $[t_0,T]$; i.e., for $t_{k+1} = T$:
$\reach((t_0,\mathcal{X}), [t_j, t_{j+1}]) \subset 
B_{M_j}([x_j], \Delta_j)\text{, for } j = 1,\dots,k$
\end{prop}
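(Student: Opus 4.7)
The plan is to prove the proposition by induction on the iteration index $j$ of the main CLRT loop, assembling the discrete-time guarantee of Theorem~\ref{thmlrt} with the continuous-time guarantee of Theorem~\ref{thmcauchy} at each step. The induction hypothesis to carry forward is that, after $j$ iterations, the algorithm has produced a center enclosure $[x_j]$, metric $M_j\succ 0$, and radius $\delta_j > 0$ such that $\reach((t_0,\mathcal{X}), t_j)\subset B_{M_j}([x_j], \delta_j)$. The base case $j=0$ is immediate from the input: $\mathcal{X} = B_{M_0}([x_0],\delta_0)$.

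For the inductive step, I would first handle the discrete part (steps 1--2 of CLRT). The IST loop only shrinks $h$ and sets $t_{j+1}=t_j+h$; it never affects soundness. Invoking the assumption that the external rigorous tool produces conservative gradient enclosures, the interval matrix $[D_x\chi_{t_j}^{t_{j+1}}([x_j])]$ is a valid enclosure $\mathcal{F}$ in the sense of~\eqref{F}, and the algorithm's upper bound $\Lambda$ satisfies~\eqref{lambdageq} either for $M_{j+1}=M_j$ or for the optimal $\hat{M}_{j+1}$ from Definition~\ref{defchoice}. Theorem~\ref{thmlrt} then yields $\reach((t_0,\mathcal{X}), t_{j+1})\subset B_{M_{j+1}}([x_{j+1}], \Lambda\cdot\delta_j)$, so setting $\delta_{j+1} = \Lambda\cdot\delta_j$ preserves the induction hypothesis into the next iteration.

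Second, I would handle the continuous segment. Here the key is to show that the radius $\Delta_j$ returned by the bloating loop (step~3) satisfies the hypothesis of Theorem~\ref{thmcauchy}. The loop exits only when dReal returns \textsf{UNSAT} on formula~\eqref{eqsat} for the current $\hat{\Delta}_0$; by the soundness interpretation given in Section~\ref{smt}, \textsf{UNSAT} certifies the universally quantified inequality
\[
\max_{\substack{x\in B_{M_j}([x_j],\Delta_j)\\ s\in[0,h]}}\|h\cdot f(t_j+s,x)\|_{M_j}\leq \Delta_j-\delta_j,
\]
which is precisely the premise of~\eqref{eqDelta}. Combined with the induction hypothesis that every point reachable at $t_j$ lies in $B_{M_j}([x_j],\delta_j)$, Theorem~\ref{thmcauchy} gives $\reach((t_0,\mathcal{X}),[t_j,t_{j+1}])\subset B_{M_j}([x_j],\Delta_j)$. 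Since the algorithm terminates only when $t_{j+1}\geq T$, the finite family of segments collectively covers $[t_0,T]$, establishing the proposition.

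The main obstacle is pinning down the soundness story for the SMT-driven bloating: one must argue carefully that the \textsf{UNSAT} branch of $\varepsilon$-satisfiability is genuinely a sound rejection (as opposed to the $\varepsilon$-\textsf{SAT} branch, which is merely informative) and that the iterative growth $\tilde{\delta}\leftarrow\tilde{\delta}\cdot C_\delta$ eventually yields such an \textsf{UNSAT}, so that the loop terminates. A secondary concern is the conservativeness of the interval enclosures for the center $[x_{j+1}]$ and the gradient $[D_x\chi_{t_j}^{t_{j+1}}([x_j])]$, both of which are assumed to be provided conservatively by the underlying rigorous integrator. Termination of the IST loop (step~1(b)) also needs a brief argument: as $h$ is halved, $\|\nabla_X u\|$ vanishes to leading order in $h$, so the threshold $\varepsilon_{IST}$ is met after finitely many halvings.
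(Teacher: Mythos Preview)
Your proposal is correct and follows essentially the same route as the paper, which merely states that the result follows from the LRT-conservativity theorem, Theorems~\ref{thmopt} and~\ref{thmcauchy}, and the soundness of the $\varepsilon$-satisfiability algorithm. Your induction argument makes explicit how these pieces fit together---in particular the role of the \textsf{UNSAT} branch in certifying the premise of Theorem~\ref{thmcauchy}---and is more detailed than the paper's one-line proof; the additional termination remarks you make are not addressed in the paper at all.
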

\begin{proof}
The proof follows from \cite[Theorem LRT-Conservativity]{Cyranka2017}, Theorems~\ref{thmopt} and~\ref{thmcauchy}, and the soundness of the $\varepsilon$-satisfiability algorithm. 
\end{proof}

CLRT is also an efficient algorithm.  This follows from the use of IST to derive the proper time increments $h$, and the use of nonlinear COP to finding initial estimates $\hat{\Delta}_0$, which are passed to the $\varepsilon$-satisfiability algorithm.
\vspace{-2ex}
\section{Implementation and Experimental Results}
\label{secimplres}
We implemented a prototype of CLRT in C++. Our implementation is based on interval arithmetic; i.e., all variables used in the algorithm are over intervals, and all computations performed are executed using interval arithmetic. The prototype runs the CLRT algorithm in two passes. 
In the first pass, overestimates for discrete segments of reachtube are generated using the LRT algorithm. In the second pass, we run the procedure for constructing continuous tight overestimates, in each time interval from the discrete ones. In the first pass, we also compute optimal norms by using the analytical formulas from Def.~\ref{defchoice}. 

To compute an upper bound $\Lambda$ for the square-root of the maximal eigenvalue of all symmetric matrices in some interval bounds, we implemented in C++ several algorithms~\cite{hladik,rump,rohn} and used the tightest result available.  Source code, numerical data, and readme file describing compilation procedure for LRT are available online~\cite{codes}.

Table~\ref{table:res} summarizes CLRT's performance on a set of benchmarks (see~\cite{Cyranka2017} for details). Fig.~\ref{figdubins} presents a visual comparison of computed bounds for benchmark D(3). The table illustrates that CLRT performs much better in all cases except Mitchell Schaeffer model and biology model. Typically our algorithm works better for stable systems, whereas CAPD is specialized for chaotic system or a system containing unstable regime in its dynamics.

\section{Related Work}
\label{sec:rw}
Computing the exact reachable set of~(\ref{cauchy}) is hard, as these systems do not admit a closed-form solution. Instead, a conservative (over-approximating) reachtube is computed to determine if an unsafe region can possibly be reached. Existing tools and techniques for conservative reachtube computation
can be classified into three categories according to the time-space approximation they perform : (1)~Taylor-expansion in time, variational-expansion in space (wrapping-effect reduction) of the solution set, e.g., CAPD~\cite{CAPD,ZW1}, VNode-LP~\cite{nedialkov2006vnode,vnode2}, CORA~\cite{Althoff2015a}. (2)~Taylor-expansion in time and space of the solution set, e.g., Cosy Infinity~\cite{makino2006cosy,BM,MB}, Flow*~\cite{CAS,CAS2}.
(3)~Bloating-factor-based and discrepancy-function-based~\cite{FKXS,Fan2015}. Other approaches to compute conservative reachtubes include the Hamilton-Jacobi-based method~\cite{mitchell2003overapproximating,bansal2017hamilton} and the recently developed Runge-Kutta-based method~\cite{alexandreditsandretto}. In this paper, we present an alternative (and orthogonal) technique based on a stretching factor that is derived from an over-approximation of the gradient of the solution-flows (also known as
the sensitivity matrix) and the deformation tensor. 
%
\section{Conclusions and Future Work}
\label{secfuture} 
We presented CLRT, a new algorithm for computing tight reachtubes for solution flows of nonlinear systems. CLRT synergistically combines a number of techniques, e.g.\ finite and infinitesimal strain theory, computation of tightest deformation metric using explicit analytical formulas, $\delta$-satisfiability, and nonconvex optimization. 

Future work includes distributing a C++ implementation of
CLRT, and extending our approach to Hybrid dynamical systems and PDEs. The implementation of our tool will significantly improve its performance on large-scale nonlinear and continuous dynamical systems.

\vspace{-2ex}
\begin{table}
\tiny
\hspace*{-2ex}
\caption{Performance comparison CAPD. We use following labels: B(2)- Brusselator, I(2)- Inverse Van der Pol oscillator, D(3)- Dubins Car, F(2)- Forced Van der Pol oscillator, M(2)- Mitchell Schaeffer cardiac-cell model, R(4)- Robot arm, O(7)- Biology model, P(12)-Polynomial system (number in parenthese denotes dimension). T: time horizon, dt: time step, ID: initial diameter in each dimension, TV: total volume of reachtubes in T, AV: average volume of reachtubes in T.}
\begin{center}
\bgroup
\def\arraystretch{1.5}%
\begin{tabular}{|c|c|c|c|c|c|c|c|}
\hline 
\multirow{2}{*}{BM} & {$T$} & {ID}	&	
      \multicolumn{2}{c|}{CLRT}&
      \multicolumn{2}{c|}{CAPD}
      \\
      \cline{4-7}
     &&& TV &  AV & TV & AV \\
	\hline 
B(2) &$10$ &$0.02$
&$\mathbf{0.21}$ &$\mathbf{2\times 10^{-4}}$
&$0.59$ &$6\times^{-4}$
\\
\hline
I(2) &$10$ &$0.02$
&$\mathbf{0.13}$ &$\mathbf{6\times10^{-5}}$
&$0.15$ &$7\times10^{-5}$
\\
\hline
D(3) &$10$ &$0.02$
&$\mathbf{0.24}$  &$\mathbf{1\times10^{-4}}$
&$2.8$ &$1\times10^{-3}$
\\
\hline
M(2) &$10$ &$0.002$
&$0.005$ &$2\times10^{-5}$
&$\mathbf{0.003}$ &$\mathbf{2\times10^{-6}}$
\\
\hline
R(4) &$10$ &$0.02$
&$\mathbf{1.2}$ &$\mathbf{1\times10^{-12}}$
&$6.2$ &$1\times10^{-10}$
\\
\hline
O(7) &$5$ &$10^{-4}$ 
&$7\times10^{-16}$ &$7\times10^{-19}$
&$\mathbf{6\times10^{-20}}$ &$\mathbf{6\times10^{-23}}$
\\
\hline
P(12)  &$0.5$ &$10^{-4}$
&$\mathbf{10^{-34}}$ &$\mathbf{10^{-36}}$
&$6\times10^{26}$ &$6\times10^{-26}$
\\
\hline
\end{tabular}
\egroup
\vspace{1ex}
\vspace*{1ex}
\end{center}
\label{table:res}
\vspace{-6ex}
\end{table}

\newcommand{\contractnumber}{FA9550-15-C-0030}
\newcommand{\contractingagency}{U.S. Air Force (AFRL)}
\newcommand{\fundingack}{Research
supported by: \contractingagency\ Contract No.~\contractnumber; NSF CPS-1446832; NSF IIS-1447549; NSF CNS-1445770; DARPA Assured Autonomy Program.}
\newcommand{\disclaimer}{Any opinions, findings and conclusions, or 
recommendations expressed
in this material are those of the authors and do not 
reflect the views of the \contractingagency.}

\vspace{2ex}
\subsection{Acknowledgements}
We thank the anonymous reviewers for their valuable comments.
\fundingack
\vspace{-2ex}
\bibliography{cLRT}
\bibliographystyle{abbrv}

\end{document}